\newtheorem{theorem}{Theorem}[section]
\newtheorem{lemma}[theorem]{Lemma}
\newtheorem{proposition}[theorem]{Proposition}
\newtheorem{exAux}[theorem]{Example}
\newtheorem{Def}[theorem]{Definition}
\newenvironment{definition}{\begin{Def} \rm}{\end{Def}}
\newtheorem{Note}[theorem]{Note}
\newenvironment{note}{\begin{Note} \rm}{\end{Note}}
\newtheorem{Problem}[theorem]{Problem}
\newtheorem{Rem}[theorem]{Remark}
\newtheorem{Not}[theorem]{Notation}
\newtheorem{Conj}[theorem]{Conjecture}
\newenvironment{conjecture}{\begin{Conj}}{\end{Conj}}
\newtheorem{Ass}[theorem]{Assumption}
\newenvironment{proof}{\medskip\noindent{\bf Proof.\ }}{\qed\medskip}
\newcommand{\qed}{\hfill\mbox{$\Box$\qquad\qquad}}
\renewcommand{\th}{\theta}
\newcommand{\K}{\mathbb{K}}
\newcommand{\F}{\mathbb{F}}
\renewcommand{\indent}{\hspace{6mm}}
\begin{document}
\thispagestyle{empty}

\begin{center}
\LARGE \bf
Tridiagonal pairs of $q$-Racah type\\
and the $\mu$-conjecture
\end{center}

\smallskip

\begin{center}
\Large
Kazumasa Nomura and 
Paul Terwilliger
\end{center}

\smallskip

\begin{quote}
\small 
\begin{center}
\bf Abstract
\end{center}
\indent
Let $\K$ denote a field and let $V$ denote a vector space over $\K$ with 
finite positive dimension.
We consider a pair of linear transformations $A:V \to V$
and $A^*:V \to V$ that satisfy the following conditions:
(i)
each of $A,A^*$ is diagonalizable;
(ii)
there exists an ordering $\lbrace V_i\rbrace_{i=0}^d$ of the eigenspaces of 
$A$ such that
$A^* V_i \subseteq V_{i-1} + V_{i} + V_{i+1}$ for $0 \leq i \leq d$,
where $V_{-1}=0$ and $V_{d+1}=0$;
(iii)
there exists an ordering $\lbrace V^*_i\rbrace_{i=0}^\delta$ 
of the eigenspaces of $A^*$ such that
$A V^*_i \subseteq V^*_{i-1} + V^*_{i} + V^*_{i+1}$ for
 $0 \leq i \leq \delta$,
where $V^*_{-1}=0$ and $V^*_{\delta+1}=0$;
(iv) 
there is no subspace $W$ of $V$ such that
$AW \subseteq W$, $A^* W \subseteq W$, $W \neq 0$, $W \neq V$.
We call such a pair a {\it tridiagonal pair} on $V$.
It is known that $d=\delta$ and for $0 \leq i \leq d$
the dimensions of $V_i$, $V_{d-i}$, $V^*_i$, $V^*_{d-i}$ coincide.
We say the pair $A,A^*$ is {\it sharp} whenever $\dim V_0=1$.
It is known that if 
$\K$ is algebraically closed then $A,A^*$ is sharp.
A conjectured classification of the sharp tridiagonal pairs
was recently introduced by T. Ito and the second author.
Shortly afterwards we introduced a conjecture, called the 
{\em $\mu$-conjecture},
which implies the classification conjecture.
In this paper we show that the $\mu$-conjecture holds
in a special case called $q$-Racah.

\bigskip
\noindent
{\bf Keywords}. 
Tridiagonal pair, Leonard pair, $q$-Racah polynomial.

\noindent 
{\bf 2000 Mathematics Subject Classification}. 
Primary: 15A21. Secondary: 
05E30, 05E35, 17Bxx.
\end{quote}

\section{Tridiagonal pairs}

\indent
Throughout the paper $\K$ denotes a field.

\medskip

We begin by recalling the notion of a tridiagonal pair. 
We will use the following terms.
Let $V$ denote a vector space over $\K$ with finite positive dimension.
For a linear transformation $A:V\to V$ and a subspace $W \subseteq V$,
we call $W$ an {\em eigenspace} of $A$ whenever $W\not=0$ and there exists 
$\th \in \K$ such that $W=\{v \in V \,|\, Av = \th v\}$;
in this case $\th$ is the {\em eigenvalue} of $A$ associated with $W$.
We say that $A$ is {\em diagonalizable} whenever $V$ is spanned by the 
eigenspaces of $A$.

\begin{definition}  {\rm \cite[Definition 1.1]{TD00}} \label{def:tdp}  \samepage
Let $V$ denote a vector space over $\K$ with finite positive dimension. 
By a {\em tridiagonal pair} on $V$ we mean an ordered pair of linear 
transformations $A:V \to V$ and $A^*:V \to V$ that satisfy the following 
four conditions.
\begin{enumerate}
\item 
Each of $A,A^*$ is diagonalizable.
\item 
There exists an ordering $\{V_i\}_{i=0}^d$ of the eigenspaces of $A$ 
such that 
\begin{equation}               \label{eq:t1}
A^* V_i \subseteq V_{i-1} + V_i+ V_{i+1} \qquad \qquad (0 \leq i \leq d),
\end{equation}
where $V_{-1} = 0$ and $V_{d+1}= 0$.
\item
There exists an ordering $\{V^*_i\}_{i=0}^{\delta}$ of the eigenspaces of 
$A^*$ such that 
\begin{equation}                \label{eq:t2}
A V^*_i \subseteq V^*_{i-1} + V^*_i+ V^*_{i+1} 
\qquad \qquad (0 \leq i \leq \delta),
\end{equation}
where $V^*_{-1} = 0$ and $V^*_{\delta+1}= 0$.
\item 
There does not exist a subspace $W$ of $V$ such  that $AW\subseteq W$,
$A^*W\subseteq W$, $W\not=0$, $W\not=V$.
\end{enumerate}
We say that the pair $A,A^*$ is {\em over} $\K$.
\end{definition}

\begin{note}   \label{note:star}        \samepage
According to a common notational convention $A^*$ denotes 
the conjugate-transpose of $A$. We are not using this convention.
In a tridiagonal pair $A,A^*$ the linear transformations $A$ and $A^*$
are arbitrary subject to (i)--(iv) above.
\end{note}

\medskip

We refer the reader to 
\cite{TD00,shape,nomsharp,nomtowards,nomstructure,NT:muconj,
LS99,qrac,madrid,tdanduq,NN,qtet,Ev,IT:Krawt,IT:qRacah} for background on tridiagonal pairs.

\medskip

In order to motivate our results we recall some facts about tridiagonal pairs.
Let $A,A^*$ denote a tridiagonal pair on $V$, as in Definition \ref{def:tdp}. 
By \cite[Lemma 4.5]{TD00} the integers $d$ and $\delta$ from (ii), (iii) are 
equal; we call this common value the {\em diameter} of the pair.
An ordering of the eigenspaces of $A$ (resp. $A^*$)
is said to be {\em standard} whenever it satisfies
\eqref{eq:t1} (resp. \eqref{eq:t2}). We comment on the uniqueness of the
standard ordering. Let $\{V_i\}_{i=0}^d$ denote a standard
ordering of the eigenspaces of $A$. By \cite[Lemma 2.4]{TD00}, the ordering
$\{V_{d-i}\}_{i=0}^d$ is also standard and no further ordering
is standard. A similar result holds for the eigenspaces
of $A^*$. Let $\{V_i\}_{i=0}^d$ (resp. $\{V^*_i\}_{i=0}^d$) 
denote a standard ordering of the eigenspaces of $A$ (resp. $A^*$).
By \cite[Corollary 5.7]{TD00}, for $0 \leq i \leq d$ the spaces $V_i$, $V^*_i$
have the same dimension; we denote this common dimension by $\rho_i$. 
By \cite[Corollaries 5.7, 6.6]{TD00} the sequence $\{\rho_i\}_{i=0}^d$ is 
symmetric and unimodal;
that is $\rho_i=\rho_{d-i}$ for $0 \leq i \leq d$ and
$\rho_{i-1} \leq \rho_i$ for $1 \leq i \leq d/2$.
We call the sequence $\{\rho_i\}_{i=0}^d$ the {\em shape} of $A,A^*$.
We say $A,A^*$ is {\it sharp} whenever $\rho_0=1$.
If $\K$ is algebraically closed then
$A,A^*$ is sharp \cite[Theorem~1.3]{nomstructure}. 

\medskip

We now summarize the present paper.
A conjectured classification of the sharp tridiagonal pairs was introduced 
in \cite[Conjecture 14.6]{IT:Krawt} and studied carefully in 
\cite{nomsharp,nomtowards,nomstructure}; see Conjecture \ref{conj:main} below.
Shortly afterwards we introduced a conjecture, called the {\em $\mu$-conjecture},
which implies the classification conjecture. The $\mu$-conjecture is roughly
described as follows. We start with a sequence
$p=(\{\th_i\}_{i=0}^d;\{\th^*_i\}_{i=0}^d)$ of scalars taken from $\K$ that
satisfy the known constraints on the eigenvalues of a tridiagonal pair over 
$\K$ of diameter $d$; these are conditions (i), (ii) of Conjecture 
\ref{conj:main}.
Following \cite[Definition 2.4]{nomstructure} we associate with $p$ 
an associative $\K$-algebra $T$ defined by generators
and relations; see Definition \ref{def:T} below.
We are interested in the $\K$-algebra $e^*_0Te^*_0$ where $e^*_0$ is a certain
idempotent element of $T$. Let $\{x_i\}_{i=1}^d$ denote mutually commuting
indeterminates. Let $\K[x_1,\ldots,x_d]$ denote the $\K$-algebra consisting of
the polynomials in $\{x_i\}_{i=1}^d$ that have all coefficients in $\K$.
In \cite[Corollary 6.3]{NT:muconj} we displayed a surjective 
$\K$-algebra homomorphism $\mu: \K[x_1,\ldots,x_d] \to e^*_0Te^*_0$.
The {\em $\mu$-conjecture} \cite[Conjecture 6.4]{NT:muconj} asserts that
$\mu$ is an isomorphism.
We have shown that the $\mu$-conjecture implies the classification 
conjecture \cite[Theorem 10.1]{NT:muconj}
and that the $\mu$-conjecture holds for $d \leq 5$ 
\cite[Theorem 12.1]{NT:muconj}.
In the present paper we obtain the following additional evidence that the
$\mu$-conjecture is true.
There is a general class of parameters $p$ said to have
{\em $q$-Racah type} \cite[Definition 3.1]{IT:qRacah}. 
In \cite[Theorem 3.3]{IT:qRacah} T. Ito and the second author verified the
classification conjecture for the case in which $p$ has $q$-Racah type
and $\K$ is algebraically closed; see Proposition \ref{prop:ITqRacah} below.
Making heavy use of this result, we verify the $\mu$-conjecture for the case 
in which $p$ has $q$-Racah type, with no restriction on $\K$. 
Our main result is Theorem \ref{thm:main}.
On our way to Theorem \ref{thm:main} we obtain two related results 
Theorem \ref{thm:A} and \ref{thm:FK}, which might be of independent interest.

\section{Tridiagonal systems}

\indent
When working with a tridiagonal pair, it is often convenient to consider
a closely related object called a tridiagonal system.
To define a tridiagonal system, we recall a few concepts from linear algebra.
Let $V$ denote a vector space over $\K$ with finite positive dimension.
Let ${\rm End}_{\K}(V)$ denote the $\K$-algebra of all linear
transformations from $V$ to $V$.
Let $A$ denote a diagonalizable element of $\text{End}_{\K}(V)$.
Let $\{V_i\}_{i=0}^d$ denote an ordering of the eigenspaces of $A$
and let $\{\th_i\}_{i=0}^d$ denote the corresponding ordering of the 
eigenvalues of $A$.
For $0 \leq i \leq d$ define $E_i \in \text{End}_{\K}(V)$ such that 
$(E_i-I)V_i=0$ and $E_iV_j=0$ for $j \neq i$ $(0 \leq j \leq d)$.
Here $I$ denotes the identity of $\mbox{\rm End}_{\K}(V)$.
We call $E_i$ the {\em primitive idempotent} of $A$ corresponding to $V_i$
(or $\th_i$).
Observe that
(i) $I=\sum_{i=0}^d E_i$;
(ii) $E_iE_j=\delta_{i,j}E_i$ $(0 \leq i,j \leq d)$;
(iii) $V_i=E_iV$ $(0 \leq i \leq d)$;
(iv) $A=\sum_{i=0}^d \theta_i E_i$.
Moreover
\begin{equation}         \label{eq:defEi}
  E_i=\prod_{\stackrel{0 \leq j \leq d}{j \neq i}}
          \frac{A-\theta_jI}{\theta_i-\theta_j}.
\end{equation}

\medskip

Now let $A,A^*$ denote a tridiagonal pair on $V$.
An ordering of the primitive idempotents or eigenvalues of $A$ (resp. $A^*$)
is said to be {\em standard} whenever the corresponding ordering of the 
eigenspaces of $A$ (resp. $A^*$) is standard.

\medskip

\begin{definition} \cite[Definition 2.1]{TD00} \label{def:TDsystem}  \samepage
Let $V$ denote a vector space over $\K$ with finite positive dimension.
By a {\em tridiagonal system} on $V$ we mean a sequence
\[
 \Phi=(A;\{E_i\}_{i=0}^d;A^*;\{E^*_i\}_{i=0}^d)
\]
that satisfies (i)--(iii) below.
\begin{itemize}
\item[(i)]
$A,A^*$ is a tridiagonal pair on $V$.
\item[(ii)]
$\{E_i\}_{i=0}^d$ is a standard ordering
of the primitive idempotents of $A$.
\item[(iii)]
$\{E^*_i\}_{i=0}^d$ is a standard ordering
of the primitive idempotents of $A^*$.
\end{itemize}
We say $\Phi$ is {\em over} $\K$. 
We call $V$ the {\em vector space underlying $\Phi$}.
\end{definition}

\medskip

The notion of isomorphism for tridiagonal systems
is defined in \cite[Definition 3.1]{nomsharp}.
The following result is immediate from lines \eqref{eq:t1}, \eqref{eq:t2}
and Definition \ref{def:TDsystem}.

\medskip

\begin{lemma} {\rm \cite[Lemma 2.5]{nomtowards}}  \label{lem:trid}  \samepage
Let $(A;\{E_i\}_{i=0}^d;A^*;\{E^*_i\}_{i=0}^d)$ denote a tridiagonal system.
Then for $0 \leq i,j,k \leq d$ the following {\rm (i)}, {\rm (ii)} hold.
\begin{itemize}
\item[\rm (i)]
$E_i {A^*}^k E_j = 0\;\;$ if  $k<|i-j|$.
\item[\rm (ii)]
$E^*_i A^k E^*_j =0\;\;$ if $k<|i-j|$.
\end{itemize}
\end{lemma}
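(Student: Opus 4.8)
The plan is to recast conditions (ii), (iii) of Definition \ref{def:tdp} as statements about the primitive idempotents, and then chain these single-step statements together using the triangle inequality. I will prove (i); part (ii) then follows by the same argument with the roles of $A$ and $A^*$ (and of $E_i$ and $E^*_i$) interchanged, using \eqref{eq:t2} in place of \eqref{eq:t1}.

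First I would establish the base case that $E_h A^* E_i = 0$ whenever $|h-i| \geq 2$. Recall $V_i = E_i V$, so line \eqref{eq:t1} gives $A^* E_i V = A^* V_i \subseteq V_{i-1}+V_i+V_{i+1}$. Applying $E_h$ and using $E_h V_j = \delta_{h,j} V_j$ (immediate from properties (ii), (iii) of the primitive idempotents), we find that $E_h A^* E_i V = 0$ unless $h \in \{i-1,i,i+1\}$. Since this holds on all of $V$, the transformation $E_h A^* E_i$ is zero whenever $|h-i| \geq 2$.

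Next I would expand ${A^*}^k$ by inserting a copy of the resolution of the identity $I = \sum_{h} E_h$ between each pair of consecutive factors of $A^*$, obtaining
\[
 E_i {A^*}^k E_j = \sum_{h_1,\ldots,h_{k-1}} E_{h_0} A^* E_{h_1} A^* E_{h_2} \cdots A^* E_{h_k},
\]
where $h_0 = i$ and $h_k = j$ are fixed, while the interior indices $h_1,\ldots,h_{k-1}$ range over $0 \leq h_\ell \leq d$. By the base case, a given summand can be nonzero only when $|h_{\ell-1}-h_\ell| \leq 1$ for $1 \leq \ell \leq k$. For such a sequence the triangle inequality yields $|i-j| \leq \sum_{\ell=1}^k |h_{\ell-1}-h_\ell| \leq k$. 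Consequently, if $k < |i-j|$ then every summand vanishes, whence $E_i {A^*}^k E_j = 0$, as desired.

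I do not anticipate a genuine obstacle: the argument is a routine telescoping via the triangle inequality, and, as the text remarks, the conclusion is essentially immediate from \eqref{eq:t1}, \eqref{eq:t2} once the idempotent reformulation of the base case is in hand. The only point requiring a little care is the bookkeeping over the index sequences $h_0,\ldots,h_k$, keeping the endpoints fixed at $i$ and $j$ while letting the interior indices range freely.
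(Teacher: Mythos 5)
Your proof is correct and is exactly the routine argument the paper has in mind: the paper offers no written proof, merely remarking that the lemma is ``immediate from lines \eqref{eq:t1}, \eqref{eq:t2} and Definition \ref{def:TDsystem},'' and your reduction to the one-step statement $E_hA^*E_i=0$ for $|h-i|\geq 2$ followed by inserting $I=\sum_h E_h$ and applying the triangle inequality is the standard way to make that immediacy precise (as in the cited reference). The only detail worth noting is the degenerate case $k=0$, where the claim reduces to the orthogonality $E_iE_j=\delta_{i,j}E_i$, which your expansion covers implicitly.
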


\begin{definition}        \label{def}        \samepage
Let $\Phi=(A;\{E_i\}_{i=0}^d;A^*;\{E^*_i\}_{i=0}^d)$ denote a tridiagonal 
system on $V$.
For $0 \leq i \leq d$ let $\th_i$ (resp. $\th^*_i$) denote the eigenvalue of 
$A$ (resp. $A^*$) associated with the eigenspace $E_iV$ (resp. $E^*_iV$).
We call $\{\th_i\}_{i=0}^d$ (resp. $\{\th^*_i\}_{i=0}^d$)
the {\em eigenvalue sequence} (resp. {\em dual eigenvalue sequence}) of $\Phi$.
We observe that $\{\th_i\}_{i=0}^d$ (resp. $\{\th^*_i\}_{i=0}^d$) are mutually 
distinct and contained in $\K$. 
We say $\Phi$ is {\it sharp} whenever the tridiagonal pair $A,A^*$ is sharp.
\end{definition}

\medskip

Let $\Phi$ denote a tridiagonal system over $\K$ with
eigenvalue sequence $\{\th_i\}_{i=0}^d$ and dual eigenvalue
sequence $\{\th^*_i\}_{i=0}^d$. 
By \cite[Theorem 11.1]{TD00} the expressions
\begin{equation}      \label{eq:indep0}
 \frac{\th_{i-2}-\th_{i+1}}{\th_{i-1}-\th_i},
\qquad\qquad
 \frac{\th^*_{i-2}-\th^*_{i+1}}{\th^*_{i-1}-\th^*_i}
\end{equation}
are equal and independent of $i$ for $2 \leq i \leq d-1$.
For this constraint the ``most general'' solution is
\begin{align}
 & \th_i= 
   a + bq^{2i-d}+cq^{d-2i} \qquad\qquad (0 \leq i \leq d), \label{eq:1} \\
 & \th^*_i =
   a^*+b^*q^{2i-d}+c^*q^{d-2i} \qquad\qquad (0 \leq i \leq d), \label{eq:2} \\
 & q, \; a, \; b, \; c,\; a^*, \; b^*, \; c^* \in \overline{\K}, \label{eq:3}\\
 & q \neq 0, \qquad q^2 \neq 1, \qquad q^2 \neq -1, \qquad 
   bb^*cc^* \neq 0,                                            \label{eq:4}
\end{align}
where $\overline{\K}$ denotes the algebraic closure of $\K$.
For this solution $q^2+q^{-2}+1$ is the common value of \eqref{eq:indep0}.
The tridiagonal system $\Phi$ is said to have {\em q-Racah type}
whenever \eqref{eq:1}--\eqref{eq:4} hold. 
The following definition is more general.

\medskip

\begin{definition}             \label{def:qRacah}
Let $d$ denote a nonnegative integer and let
$(\{\th_i\}_{i=0}^d; \{\th^*_i\}_{i=0}^d)$ denote a sequence
of scalars taken from $\K$.
We call this sequence {\em $q$-Racah} whenever the following
(i), (ii) hold.
\begin{itemize}
\item[(i)]
$\th_i \neq \th_j$, $\th^*_i \neq \th^*_j$ if $i \neq j$
$(0 \leq i,j \leq d)$.
\item[(ii)]
There exist $q,a,b,c,a^*,b^*,c^*$ that satisfy \eqref{eq:1}--\eqref{eq:4}.
\end{itemize}
\end{definition}

\medskip

We will return to the subject of $q$-Racah a bit later.
We now recall the split sequence of a sharp tridiagonal system.
We will use the following notation.

\medskip

\begin{definition}  \label{def:tau}    \samepage
Let $\lambda$ denote an indeterminate and let $\K[\lambda]$ denote the 
$\K$-algebra consisting of the polynomials in $\lambda$ that have 
all coefficients in $\K$.
Let $d$ denote a nonnegative integer and let
$(\{\th_i\}_{i=0}^d; \{\th^*_i\}_{i=0}^d)$ denote a sequence of scalars 
taken from $\K$.
Then for $0 \leq i \leq d$ we define the following polynomials in $\K[\lambda]$:
\begin{align*}
 \tau_i &= (\lambda-\th_0)(\lambda-\th_1)\cdots(\lambda -\th_{i-1}), \\
 \eta_i &= (\lambda-\th_d)(\lambda-\th_{d-1})\cdots(\lambda-\th_{d-i+1}),  \\
 \tau^*_i &= (\lambda-\th^*_0)(\lambda-\th^*_1)\cdots(\lambda-\th^*_{i-1}), \\
 \eta^*_i &= (\lambda-\th^*_d)(\lambda-\th^*_{d-1})\cdots(\lambda-\th^*_{d-i+1}).
\end{align*}
Note that each of $\tau_i$, $\eta_i$, $\tau^*_i$, $\eta^*_i$ is monic with 
degree $i$.
\end{definition}

\begin{definition} {\rm\cite[Definition 2.5]{NT:muconj}} \label{def:split} \samepage
Let $(A; \{E_i\}_{i=0}^d; A^*; \{E^*_i\}_{i=0}^d)$ denote a sharp tridiagonal 
system over $\K$, with eigenvalue sequence $\{\th_i\}_{i=0}^d$
and dual eigenvalue sequence $\{\th^*_i\}_{i=0}^d$.
By \cite[Lemma 5.4]{nomstructure},
for $0 \leq i \leq d$ there exists a unique $\zeta_i \in \K$ such that 
\[
E^*_0 \tau_i(A) E^*_0 = 
\frac{\zeta_i E^*_0}
{(\theta^*_0-\theta^*_1)(\theta^*_0-\theta^*_2)\cdots(\theta^*_0-\theta^*_i)}. 
\]
Note that $\zeta_0=1$.
We call $\lbrace \zeta_i \rbrace_{i=0}^d$ the {\em split sequence} of the 
tridiagonal system.
\end{definition}

\begin{definition} \cite[Definition 6.2]{nomsharp}   \samepage
Let $\Phi$ denote a sharp tridiagonal system.
By the {\em parameter array} of $\Phi$ we mean the sequence
 $(\{\theta_i\}_{i=0}^d; \{\theta^*_i\}_{i=0}^d; \{\zeta_i\}_{i=0}^d)$
where  $\{\theta_i\}_{i=0}^d$ (resp. $\{\theta^*_i\}_{i=0}^d$) is the 
eigenvalue sequence (resp. dual eigenvalue sequence) of $\Phi$ and
$\{\zeta_i\}_{i=0}^d$ is the split sequence of $\Phi$.
\end{definition}

\section{The classification conjecture}

\indent
To motivate our results we recall a conjectured
classification of the tridiagonal systems due to 
T. Ito and the second author.

\medskip

\begin{conjecture} {\rm \cite[Conjecture~14.6]{IT:Krawt}}\label{conj:main}
\samepage  
Let $d$ denote a nonnegative integer and let
\begin{equation}         \label{eq:parray}
 (\{\th_i\}_{i=0}^d; \{\th^*_i\}_{i=0}^d; \{\zeta_i\}_{i=0}^d)
\end{equation}
denote a sequence of scalars taken from $\K$.
Then there exists a sharp tridiagonal system $\Phi$ over $\K$ with parameter 
array \eqref{eq:parray} if and only if {\rm (i)--(iii)} hold below.
\begin{itemize}
\item[\rm (i)]
$\th_i \neq \th_j$, $\th^*_i \neq \th^*_j$ if $i \neq j$
$(0 \leq i,j \leq d)$.
\item[\rm (ii)]
The expressions
\begin{equation}    \label{eq:indep}
 \frac{\th_{i-2}-\th_{i+1}}{\th_{i-1}-\th_{i}},  \qquad\qquad
 \frac{\th^*_{i-2}-\th^*_{i+1}}{\th^*_{i-1}-\th^*_{i}}
\end{equation}
are equal and independent of $i$ for $2 \leq i \leq d-1$.
\item[\rm (iii)]
$\zeta_0=1$, $\zeta_d \neq 0$ and
\begin{equation}  \label{eq:ineq}
    0 \neq  \sum_{i=0}^d \eta_{d-i}(\th_0)\eta^*_{d-i}(\th^*_0) \zeta_i.
\end{equation}
\end{itemize}
Suppose {\rm (i)--(iii)} hold.
Then $\Phi$ is unique up to isomorphism of tridiagonal systems.
\end{conjecture}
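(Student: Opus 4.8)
The plan is to separate the biconditional into its necessity and sufficiency directions, treating the uniqueness clause together with sufficiency. The necessity direction is essentially already available in the literature, so the substance of a proof lies in constructing a sharp tridiagonal system from data satisfying (i)--(iii) and in showing such a system is determined up to isomorphism by its parameter array. My strategy is to reduce both of these to a single algebraic statement, the $\mu$-conjecture \cite[Conjecture 6.4]{NT:muconj}, and then to establish that statement for every admissible choice of eigenvalue data.

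For necessity, suppose $\Phi$ is a sharp tridiagonal system with parameter array \eqref{eq:parray}. Distinctness of the $\th_i$ and of the $\th^*_i$ (condition (i)) is built into Definition \ref{def}. The independence of the expressions in \eqref{eq:indep} (condition (ii)) is \cite[Theorem 11.1]{TD00}. For condition (iii), the normalization $\zeta_0=1$ is immediate from Definition \ref{def:split}; the inequalities $\zeta_d\neq 0$ and \eqref{eq:ineq} express that the split decomposition of $V$ is nondegenerate and that $E^*_0V$ is a cyclic generator, and I would extract them from the structure theory of sharp systems in \cite{nomsharp,nomstructure}. Thus necessity reduces to assembling known results.

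The heart of the matter is sufficiency and uniqueness. Given a sequence satisfying (i)--(iii), let $p=(\{\th_i\}_{i=0}^d;\{\th^*_i\}_{i=0}^d)$ and form the $\K$-algebra $T=T(p)$ of Definition \ref{def:T} together with its idempotent $e^*_0$ and the surjection $\mu:\K[x_1,\ldots,x_d]\to e^*_0Te^*_0$ of \cite[Corollary 6.3]{NT:muconj}. By \cite[Theorem 10.1]{NT:muconj}, once $\mu$ is known to be an isomorphism the classification conjecture follows for every parameter array whose eigenvalue data is $p$; in particular both the existence of $\Phi$ and its uniqueness are delivered simultaneously. Hence the entire problem reduces to verifying the $\mu$-conjecture for every $p$ whose entries obey (i), (ii). The present paper disposes of the case where $p$ has $q$-Racah type (Theorem \ref{thm:main}), so what remains is all other eigenvalue data: $q$-type data with $q^2\neq\pm 1$ but with a vanishing leading or trailing coefficient in \eqref{eq:1} or \eqref{eq:2}, the $q\to 1$ family in which the $\th_i$ are quadratic in $i$, and the $q\to -1$ (Bannai--Ito) family. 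Since $\mu$ is always surjective, the task in each case is purely the injectivity of $\mu$, i.e. the algebraic independence of $\mu(x_1),\ldots,\mu(x_d)$ in $e^*_0Te^*_0$.

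The main obstacle is precisely this injectivity in the non-$q$-Racah cases, and I expect it to be genuinely hard rather than routine. The $q$-Racah argument leans on Proposition \ref{prop:ITqRacah}: one uses an abundant supply of already-classified tridiagonal systems to realize $e^*_0Te^*_0$ on enough modules to witness that no nontrivial polynomial kills the $\mu(x_i)$. In the degenerate families no such classification is available a priori, so this route is circular. A tempting alternative is a degeneration argument: the relations defining $T(p)$, and hence $\mu$, depend polynomially on the parameters of \eqref{eq:1}--\eqref{eq:2}, and for any fixed total degree the linear independence of the monomials in the $\mu(x_i)$ up to that degree is an open condition that holds on the dense $q$-Racah locus. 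However, openness only yields independence generically; in a limit the rank of the relevant Gram-type matrix can drop, which is exactly the event that would break injectivity, so semicontinuity points the wrong way and cannot close the gap. To overcome this I would instead aim for a \emph{flatness} statement: exhibit, over the parameter space, a basis of $e^*_0Te^*_0$ in each degree whose cardinality is independent of $p$, so that surjectivity of $\mu$ together with constant fibre dimensions forces $\mu$ to be an isomorphism uniformly. Failing that, one must prove degenerate analogues of Proposition \ref{prop:ITqRacah} directly, constructing the required modules from the representation theory attached to each family in the $q\to\pm 1$ limits. Establishing either the flat basis or these independent module constructions is where essentially all of the difficulty resides.
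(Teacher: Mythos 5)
Your proposal correctly recognizes that this statement is an open conjecture which the paper itself does not prove in full, and your program coincides exactly with the paper's: necessity from \cite[Section 8]{nomsharp} and \cite[Theorem 11.1]{TD00}, uniqueness from \cite[Theorem 1.6]{nomstructure}, and sufficiency reduced to the $\mu$-conjecture via \cite[Theorem 10.1]{NT:muconj}, with the $q$-Racah case settled by Theorem \ref{thm:main} (resting on Proposition \ref{prop:ITqRacah}). Your identification of the residual difficulty --- injectivity of $\mu$ for the degenerate ($b b^* c c^*=0$, $q^2=1$, and Bannai--Ito) families, where the module-supply argument is unavailable and mere semicontinuity points the wrong way --- is an accurate account of exactly what the paper leaves open, so your admission of a gap is faithfulness to the state of the art rather than a flaw in the attempt.
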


\medskip

In \cite[Section 8]{nomsharp} we proved the ``only if'' direction
of Conjecture \ref{conj:main}.
In \cite[Theorem 1.6]{nomstructure} we proved the last assertion
of Conjecture \ref{conj:main}. 
Concerning the ``if'' direction of Conjecture \ref{conj:main},
we proved this for $d\leq 5$ in \cite[Corollary 12.2]{NT:muconj}. 
We also have the following result due to T. Ito and the second author.

\medskip

\begin{proposition}  {\rm \cite[Theorem 3.3]{IT:qRacah}} \label{prop:ITqRacah}
\samepage
Assume $\K$ is algebraically closed.
Let $d$ denote a nonnegative integer and let
$(\{\th_i\}_{i=0}^d; \{\th^*_i\}_{i=0}^d)$ denote a sequence of scalars taken
from $\K$ that is $q$-Racah in the sense of Definition {\rm \ref{def:qRacah}}.
Let $\{\zeta_i\}_{i=0}^d$ denote a sequence of scalars taken from $\K$
that satisfies condition {\rm (iii)} of Conjecture {\rm \ref{conj:main}}.
Then there exists a sharp tridiagonal system over $\K$ that has
parameter array
$(\{\th_i\}_{i=0}^d; \{\th^*_i\}_{i=0}^d; \{\zeta_i\}_{i=0}^d)$.
\end{proposition}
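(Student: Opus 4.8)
The plan is to realize the required sharp tridiagonal system inside a finite-dimensional module for the quantum loop algebra $\UqL$. The key leverage is that $\K$ is algebraically closed, so by the Chari--Pressley theory every finite-dimensional irreducible $\UqL$-module (of type $1$) is a tensor product of evaluation modules and is determined by a Drinfeld polynomial with roots in $\K=\overline{\K}$. On such a module the equitable presentation of $\UqL$ supplies a canonical family of generators, and a suitable fixed pair of them acts as operators $A,A^*$ whose eigenvalues automatically take the $q$-Racah shape \eqref{eq:1}, \eqref{eq:2}. The whole problem thus becomes: produce, from the prescribed parameter array, a Drinfeld polynomial whose associated module carries a pair $A,A^*$ realizing exactly that array.

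First I would normalize the eigenvalue data. Using the affine freedom $A\mapsto \alpha A+\beta I$, $A^*\mapsto \alpha^* A^*+\beta^* I$, which preserves the tridiagonal axioms and acts affinely on the eigenvalues, together with $bb^*cc^*\neq 0$, I would reduce \eqref{eq:1}, \eqref{eq:2} to a convenient normal form that pins down $q$ and the remaining eigenvalue parameters. On the module side I would take $V$ to be the finite-dimensional irreducible $\UqL$-module attached to a Drinfeld polynomial whose $d$ roots (the evaluation parameters) $\{a_j\}_{j=1}^d$ are left as unknowns, and define $A,A^*$ as fixed linear combinations of the equitable generators acting on $V$. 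Here I would verify the four tridiagonal axioms directly: diagonalizability and the $q$-Racah eigenvalue shape from the weight-space structure, the three-term inclusions \eqref{eq:t1}, \eqref{eq:t2} from the defining relations of $\UqL$, sharpness from the one-dimensionality of the top weight space, and irreducibility (axiom (iv)) from that of the module.

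Next I would compute the parameter array of the constructed pair. The two eigenvalue sequences come out in the form \eqref{eq:1}, \eqref{eq:2} with $q,b,c,b^*,c^*$ read off from the normalization, so the eigenvalue part of the match is automatic. The split sequence $\{\zeta_i\}_{i=0}^d$, defined through $E^*_0\tau_i(A)E^*_0$ as in Definition \ref{def:split}, I would express as an explicit rational function of the evaluation parameters $\{a_j\}$. The matching step is then to solve, for the $\{a_j\}$, the system equating this function to the prescribed $\{\zeta_i\}$. Since there are $d$ unknowns $\{a_j\}$ and $d$ free split-sequence entries $\zeta_1,\dots,\zeta_d$ (recall $\zeta_0=1$), a solution over $\K=\overline{\K}$ is to be expected on dimension grounds, and algebraic closedness is exactly what guarantees the required roots exist.

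The hard part is this last matching step: proving that the split-sequence map is surjective onto precisely the set of sequences satisfying condition (iii) of Conjecture \ref{conj:main}. Two points need care. First, solvability over $\K$: the governing equations are polynomial, and one must show that the relevant resultant does not force a degeneration that pushes the solution outside the admissible locus. Second, and most delicately, the inequality \eqref{eq:ineq} must be identified as exactly the condition under which the constructed module is irreducible, so that axiom (iv) holds and one obtains a genuine sharp tridiagonal pair rather than a decomposable or non-sharp object. I expect the left-hand side of \eqref{eq:ineq} to coincide, up to a nonzero factor, with a Gram-type determinant, or with the distinguished matrix entry of $\tau_d(A)$ coupling the $E^*_0$ and $E^*_d$ components, whose nonvanishing is equivalent to the absence of a proper $A,A^*$-invariant subspace; the condition $\zeta_d\neq 0$ plays the complementary role of keeping the diameter equal to $d$. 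Establishing this equivalence is where the bulk of the work lies.
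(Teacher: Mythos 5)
You should first note that the paper contains no proof of Proposition \ref{prop:ITqRacah} at all: it is imported verbatim from the cited work of Ito and the second author ({\rm \cite[Theorem 3.3]{IT:qRacah}}), and the entire design of the present paper is to use it as a black box inside Theorems \ref{thm:A} and \ref{thm:main}. (The closing Lemma even removes the algebraic closedness hypothesis, but only by invoking Theorem \ref{thm:main}, so it cannot serve as an internal proof.) Judged as a guess at the external proof, your outline is directionally consistent with the cited work: there too the construction lives in the representation theory of the quantum loop algebra $\UqL$, and algebraic closedness enters through factoring a polynomial attached to the data into linear factors that supply evaluation-type parameters.

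As a proof, however, your proposal has a genuine gap: every step that carries the actual content of the theorem is deferred. First, the matching step is not an argument. A polynomial map $\overline{\K}^d \to \overline{\K}^d$ from evaluation parameters to split sequences need not be surjective, and ``a solution is to be expected on dimension grounds'' proves nothing about hitting the open locus cut out by condition (iii) of Conjecture \ref{conj:main}; what is needed is the explicit closed-form correspondence between $\{\zeta_i\}_{i=0}^d$ and a degree-$d$ polynomial whose roots are the evaluation parameters, so that solvability reduces literally to factoring a polynomial over an algebraically closed field --- and you never produce that correspondence. Second, the identification of the inequality \eqref{eq:ineq} with irreducibility of the constructed module is flagged by you with ``I expect''; this is precisely the delicate point, and without it axiom (iv) of Definition \ref{def:tdp}, sharpness, and the condition $\zeta_d \neq 0$ keeping the diameter equal to $d$ are all unverified. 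When the inequality degenerates the relevant tensor products become reducible and one must pass to subquotients, which changes both the shape and the split sequence, so this bookkeeping cannot be waved away. Third, a technical error in the setup: the spectrum of a single equitable generator on a finite-dimensional $\UqL$-module is of $q$-geometric type, i.e.\ has $c=0$ in \eqref{eq:1}, and the affine freedom $A \mapsto \alpha A + \beta I$ can never create the two-sided form $a + bq^{2i-d} + cq^{d-2i}$ with $bc \neq 0$; so eigenvalues of the full $q$-Racah shape are \emph{not} automatic, and the mixed linear combinations that do achieve $bb^*cc^* \neq 0$ force you to re-verify the inclusions \eqref{eq:t1}, \eqref{eq:t2} from scratch, which is itself a substantial part of the cited proof.
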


\section{The $\mu$-conjecture}

\indent
In this section we recall the $\mu$-conjecture.
It has to do with the following algebra.

\medskip

\begin{definition}{\rm \cite[Definition 2.4]{nomstructure}}\label{def:T}
\samepage
Let $d$ denote a nonnegative integer, and let
$p=(\{\th_i\}_{i=0}^d; \{\th^*_i\}_{i=0}^d)$ denote a sequence of scalars taken 
from $\K$ that satisfies conditions (i), (ii) of Conjecture {\rm \ref{conj:main}}.
Let $T=T(p,{\K})$ denote the associative 
$\K$-algebra with $1$, defined by generators
$a$, $\{e_i\}_{i=0}^d$, $a^*$, $\{e^*_i\}_{i=0}^d$ and relations
\begin{equation}                            \label{eq:eiej}
  e_ie_j=\delta_{i,j}e_i, \qquad 
  e^*_ie^*_j=\delta_{i,j}e^*_i \qquad\qquad 
  (0 \leq i,j \leq d),
\end{equation}
\begin{equation}                            \label{eq:sumei}
  1=\sum_{i=0}^d e_i, \qquad\qquad
  1=\sum_{i=0}^d e^*_i,
\end{equation}
\begin{equation}                                  \label{eq:sumthiei}
   a = \sum_{i=0}^d \theta_ie_i, \qquad\qquad
   a^* = \sum_{i=0}^d \theta^*_i e^*_i,
\end{equation}
\begin{equation}                                     \label{eq:esiakesj}
 e^*_i a^k e^*_j = 0  \qquad \text{if $\;k<|i-j|$}
                  \qquad\qquad (0 \leq i,j,k \leq d),
\end{equation}
\begin{equation}                                      \label{eq:eiaskej}
 e_i {a^*}^k e_j = 0  \qquad \text{if $\;k<|i-j|$} 
                  \qquad\qquad (0 \leq i,j,k \leq d). 
\end{equation}
\end{definition}

\medskip

The algebra $T$ is related to tridiagonal systems as follows.

\medskip

\begin{lemma}  {\rm \cite[Lemma 2.5]{nomstructure}}  \label{lem:T-module}
Let $V$ denote a vector space over $\K$ with finite positive dimension.
Let $(A;\{E_i\}_{i=0}^d;A^*;\{E^*_i\}_{i=0}^d)$ denote a tridiagonal
system on $V$ with eigenvalue sequence $\{\th_i\}_{i=0}^d$ and 
dual eigenvalue sequence $\{\th^*_i\}_{i=0}^d$.
For the sequence $p=(\{\th_i\}_{i=0}^d; \{\th^*_i\}_{i=0}^d)$ let
$T=T(p,\K)$ denote the algebra from Definition {\rm \ref{def:T}}.
Then there exists a unique $T$-module structure on $V$ such that
$a$, $a^*$, $e_i$, $e^*_i$ acts on $V$ as
$A$, $A^*$, $E_i$, $E^*_i$, respectively.
Moreover this $T$-module is irreducible.
\end{lemma}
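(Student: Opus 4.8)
The plan is to construct the module structure by letting the generators act in the obvious way and then checking that the defining relations of $T$ are respected. As a preliminary I would note that the algebra $T=T(p,\K)$ is indeed defined: the eigenvalue and dual eigenvalue sequences $\{\th_i\}_{i=0}^d$, $\{\th^*_i\}_{i=0}^d$ of a tridiagonal system are mutually distinct (Definition \ref{def}), giving condition (i) of Conjecture \ref{conj:main}, and they satisfy the constraint \eqref{eq:indep} by \cite[Theorem 11.1]{TD00} as recorded after Definition \ref{def}, giving condition (ii). Hence $p$ meets the hypotheses of Definition \ref{def:T}.

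First I would consider the assignment $a\mapsto A$, $a^*\mapsto A^*$, $e_i\mapsto E_i$, $e^*_i\mapsto E^*_i$, which determines a $\K$-algebra homomorphism from the free associative $\K$-algebra on the generators of $T$ into $\text{End}_{\K}(V)$. To see that it factors through $T$, I would verify the five families of relations one at a time. Relations \eqref{eq:eiej} and \eqref{eq:sumei} hold because $\{E_i\}_{i=0}^d$ (resp. $\{E^*_i\}_{i=0}^d$) are the primitive idempotents of $A$ (resp. $A^*$), which supplies orthogonality, idempotency, and completeness. Relations \eqref{eq:sumthiei} are the spectral decompositions $A=\sum_i\th_iE_i$ and $A^*=\sum_i\th^*_iE^*_i$, where the scalars are exactly the eigenvalues matched to the idempotents in Definition \ref{def}. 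Finally, the two vanishing relations \eqref{eq:esiakesj} and \eqref{eq:eiaskej} are precisely Lemma \ref{lem:trid}(ii) and (i). Thus the homomorphism descends to $T$, endowing $V$ with a $T$-module structure with the stated action.

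For uniqueness I would argue that since $T$ is generated as a $\K$-algebra by $a,a^*,\{e_i\}_{i=0}^d,\{e^*_i\}_{i=0}^d$, any $T$-module structure is completely determined by the action of these generators; prescribing them to act as $A,A^*,E_i,E^*_i$ therefore leaves no freedom. For irreducibility I would show that the $T$-submodules of $V$ coincide with the subspaces invariant under both $A$ and $A^*$. One inclusion is immediate: a $T$-submodule is in particular $a$- and $a^*$-invariant, hence $A$- and $A^*$-invariant. The reverse inclusion uses \eqref{eq:defEi}: each $E_i$ is a polynomial in $A$ and each $E^*_i$ a polynomial in $A^*$, so any $A$- and $A^*$-invariant subspace is automatically invariant under every generator and thus under all of $T$. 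Condition (iv) of Definition \ref{def:tdp} then says the only common invariant subspaces of $A,A^*$ are $0$ and $V$, so the $T$-module is irreducible.

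There is no genuine obstacle here: the proof is a direct match between the structural facts already in hand (the primitive-idempotent identities, the spectral decompositions, Lemma \ref{lem:trid}, and the irreducibility condition (iv)) and the defining relations of $T$, taken one at a time. The only point meriting a moment's care is the observation, via \eqref{eq:defEi}, that each primitive idempotent is a polynomial in the corresponding generator; this is exactly what collapses $T$-invariance to joint $A,A^*$-invariance and lets the irreducibility assertion reduce cleanly to condition (iv).
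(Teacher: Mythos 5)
Your proposal is correct, and it is the standard verification: the paper itself gives no proof of this lemma, simply quoting it from \cite[Lemma 2.5]{nomstructure}, and the argument expected there is precisely what you wrote --- the relations \eqref{eq:eiej}--\eqref{eq:sumthiei} hold by the primitive idempotent identities and spectral decompositions, \eqref{eq:esiakesj} and \eqref{eq:eiaskej} are Lemma \ref{lem:trid}(ii) and (i), uniqueness follows since the images of the generators are prescribed, and irreducibility reduces to condition (iv) of Definition \ref{def:tdp} because \eqref{eq:defEi} exhibits each $E_i$ (resp.\ $E^*_i$) as a polynomial in $A$ (resp.\ $A^*$). Your preliminary check that $p$ satisfies conditions (i), (ii) of Conjecture \ref{conj:main}, so that $T(p,\K)$ is actually defined, is a point worth making explicit and you handled it correctly via Definition \ref{def} and the remark following it citing \cite[Theorem 11.1]{TD00}.
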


\medskip

For the rest of this section, let $d$ denote a nonnegative integer and
let $p=(\{\th_i\}_{i=0}^d; \{\th^*_i\}_{i=0}^d)$ denote a sequence
of scalars taken from $\K$ that satisfies conditions (i), (ii) of 
Conjecture {\rm \ref{conj:main}}.
Let $T=T(p,\K)$ denote the corresponding algebra from
Definition \ref{def:T}.
Observe that $e^*_0Te^*_0$ is a $\K$-algebra with multiplicative
identity $e^*_0$.

\medskip

\begin{lemma} {\rm \cite[Theorem~2.6]{nomstructure}} \label{lem:commute}
\samepage
The algebra $e^*_0Te^*_0$ is commutative and generated by
\[
  e^*_0\tau_i(a)e^*_0 \qquad \qquad (1\leq i \leq d).
\]
\end{lemma}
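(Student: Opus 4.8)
The statement concerns $R:=e^*_0Te^*_0$, with identity $X_0:=e^*_0=e^*_0\tau_0(a)e^*_0$; writing $X_i:=e^*_0\tau_i(a)e^*_0$ for $1\le i\le d$, I must show $R$ is commutative and generated by $X_1,\dots,X_d$. The plan is to first pass to a concrete spanning set, then treat generation and commutativity in turn. First I would observe that $a$ and $a^*$ already generate $T$: from \eqref{eq:eiej}--\eqref{eq:sumthiei} one deduces $\prod_{j=0}^d(a-\theta_j)=0$ and the Lagrange identity $e_i=\prod_{j\ne i}(a-\theta_j)/(\theta_i-\theta_j)$, so each $e_i\in\K[a]$, and dually each $e^*_i\in\K[a^*]$. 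Hence $T$ is spanned by words in $a,a^*$ and $R$ is spanned by the sandwiched words $e^*_0we^*_0$. Using $e^*_0a^*=\theta^*_0e^*_0=a^*e^*_0$ I may delete any leading or trailing $a^*$, so $w$ may be taken to alternate between powers of $a$ and single $a^*$'s, beginning and ending with a power of $a$.

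The tools for generation are the two vanishing families \eqref{eq:esiakesj}, \eqref{eq:eiaskej} and the raising--lowering splitting $a=\ell+h+r$ relative to $\{e^*_i\}_{i=0}^d$, which is legitimate since $e^*_iae^*_j=0$ for $|i-j|>1$. From \eqref{eq:esiakesj} together with $\deg\tau_i=i$ (all exponents being reducible to at most $d$ via the minimal polynomial of $a$) I get the key vanishing $e^*_0\tau_i(a)e^*_j=0$ and $e^*_j\tau_i(a)e^*_0=0$ whenever $j>i$. I would then reduce a sandwiched word by induction on the number of interior $a^*$'s: inserting $a^*=\sum_m\theta^*_me^*_m$ at one such letter splits the word into a sum of level blocks $e^*_0f(a)e^*_m\cdots e^*_mg(a)e^*_0$. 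The blocks with $m=0$ contribute products of the $X_i$, but the blocks with $m\ge1$ contain intermediate factors $e^*_0f(a)e^*_m$ that do not a priori lie in $\K[X_1,\dots,X_d]$. The substance of this step is therefore to enlarge the spanning set to these level-indexed elements and to prove, using \emph{both} vanishing families and the degree bound, that the enlarged set closes under multiplication and ultimately collapses onto $\K[X_1,\dots,X_d]$.

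For commutativity I would introduce the reversal antiautomorphism $\dagger\colon T\to T$ fixing $a,a^*,e_i,e^*_i$; it is well defined because every defining relation is symmetric under reversing words (for instance \eqref{eq:esiakesj} is carried to itself upon swapping $i,j$). Since the factors of $\tau_i(a)$ commute we have $\tau_i(a)^\dagger=\tau_i(a)$, hence $X_i^\dagger=X_i$. Once $R$ is known to be generated by the $\dagger$-fixed elements $X_i$, commutativity of $R$ is \emph{equivalent} to the assertion that every sandwiched word is $\dagger$-fixed, i.e.\ $e^*_0we^*_0=e^*_0\widetilde{w}\,e^*_0$ for the reversed word $\widetilde w$; indeed $\dagger$-fixedness forces $X_iX_j=(X_iX_j)^\dagger=X_jX_i$, and conversely. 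I would secure this by arranging the reduction above to be reversal-equivariant, so that its rewriting steps occur in mirror pairs and the normal forms may be taken $\dagger$-fixed; a Diamond-Lemma/confluence argument, whose defining ideal is reversal-stable, then yields symmetric normal forms and hence commutativity.

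The main obstacle is the generation step: controlling the intermediate-level terms $e^*_0f(a)e^*_m$, showing the enlarged level-indexed spanning set closes, and proving it collapses onto the $X_i$. This is precisely where the dual relation \eqref{eq:eiaskej}, unused so far, must enter, and where the reversal symmetry must be tracked so that commutativity drops out in parallel. I expect the bookkeeping of these level sequences and the confluence of the rewriting to be the technically demanding part; by comparison, the reduction to words and the antiautomorphism reformulation of commutativity are routine.
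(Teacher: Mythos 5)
Your executed steps are all correct, but they are exactly the routine ones: $e_i\in\K[a]$ and $e^*_i\in\K[a^*]$ by Lagrange interpolation, stripping boundary $a^*$'s via $e^*_0a^*=\th^*_0e^*_0$, the vanishing $e^*_0\tau_i(a)e^*_j=0$ for $j>i$, and the existence of the reversal antiautomorphism $\dagger$ (the defining relations \eqref{eq:eiej}--\eqref{eq:eiaskej} are indeed reversal-stable, so $\dagger$ exists by the universal property, fixes each of your generators $X_i$, and would make commutativity equivalent to $\dagger$-fixedness of $e^*_0Te^*_0$ \emph{once generation is known}). The proof stops precisely where the theorem's content begins. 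You acknowledge that the blocks passing through levels $m\ge 1$ produce factors $e^*_0f(a)e^*_m$ not visibly in $\K[X_1,\dots,X_d]$, and you declare that ``the substance of this step'' is to enlarge the spanning set and show it closes and collapses --- but no mechanism for that collapse is exhibited: no identity rewriting such a block in terms of lower-level data, no controlled induction parameter, and the dual relations \eqref{eq:eiaskej}, which you correctly flag as indispensable, are never actually used. The commutativity argument is likewise deferred to an unconstructed ``reversal-equivariant confluent rewriting system'': you give no rewriting rules beyond the trivial normalizations, resolve no ambiguities, and there is no a priori reason the naive system is confluent. Since generation is unproved, the $\dagger$-equivalence cannot be invoked, so neither assertion of the lemma is established; what you have is a correct setup plus an accurate diagnosis of the difficulty.

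For calibration: the present paper does not prove this lemma at all --- it imports it wholesale as \cite[Theorem 2.6]{nomstructure}, and Lemma \ref{lem:mu} is then read off as an immediate consequence. The proof in the cited paper is substantial: it handles exactly the collapse of excursions through higher $e^*$-levels by explicit identities and a nontrivial induction on the structure of the words $e^*_0a^{k_1}a^*a^{k_2}a^*\cdots e^*_0$, using both triple-product families \eqref{eq:esiakesj} and \eqref{eq:eiaskej}. That this is the hard core, and not a formal consequence of the preliminary reductions, is consistent with your own closing assessment --- but identifying the obstacle is not the same as overcoming it, so the proposal has a genuine gap in both the generation and the commutativity claims.
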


\begin{definition}      \samepage
Let $\{x_i\}_{i=1}^d$ denote mutually commuting indeterminates.
We denote by $\K[x_1,\ldots,x_d]$ the $\K$-algebra consisting of the 
polynomials in $\{x_i\}_{i=1}^d$ that have all coefficients in $\K$.
\end{definition}

\medskip

From Lemma \ref{lem:commute} we immediately obtain the following.

\medskip

\begin{lemma}  {\rm \cite[Corollary 6.3]{NT:muconj}} \label{lem:mu}   \samepage
There exists a surjective $\K$-algebra homomorphism 
\[
    \mu : \K[x_1,\ldots,x_d] \to e^*_0 T e^*_0
\]
that sends $x_i \mapsto e^*_0\tau_i(a)e^*_0$ for $1 \leq i \leq d$.
\end{lemma}

\medskip

The following conjecture was introduced in \cite[Conjecture 6.4]{NT:muconj}.

\medskip

\begin{conjecture}{\rm ($\mu$-conjecture)} \label{conj:mainp}
\samepage
The map $\mu$ from Lemma {\rm \ref{lem:mu}} is an isomorphism.
\end{conjecture}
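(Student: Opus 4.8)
The plan is to prove that $\mu$ is injective; surjectivity is already Lemma \ref{lem:mu}. Since $\mu$ is a homomorphism of $\K$-algebras and the scalars $\{\th_i\}_{i=0}^d,\{\th^*_i\}_{i=0}^d$ lie in $\K\subseteq\overline{\K}$, base change identifies $T(p,\overline{\K})$ with $T(p,\K)\otimes_\K\overline{\K}$, hence $e^*_0T(p,\overline{\K})e^*_0$ with $(e^*_0T(p,\K)e^*_0)\otimes_\K\overline{\K}$, and under $\overline{\K}[x_1,\ldots,x_d]=\K[x_1,\ldots,x_d]\otimes_\K\overline{\K}$ the associated map is $\mu\otimes_\K\mathrm{id}$. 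As $\overline{\K}$ is faithfully flat over $\K$, the map $\mu$ is injective if and only if its base change to $\overline{\K}$ is. Thus I may assume $\K$ is algebraically closed, which is precisely the hypothesis under which Proposition \ref{prop:ITqRacah} is available.

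Assume now that $\K$ is algebraically closed and that $(\{\th_i\}_{i=0}^d;\{\th^*_i\}_{i=0}^d)$ is $q$-Racah. The idea is to detect elements of $\ker\mu$ by evaluating them on the one-dimensional spaces $E^*_0V$ of suitable tridiagonal systems. Let $\{\zeta_i\}_{i=0}^d$ be any sequence satisfying condition (iii) of Conjecture \ref{conj:main}. By Proposition \ref{prop:ITqRacah} there is a sharp tridiagonal system $\Phi=(A;\{E_i\}_{i=0}^d;A^*;\{E^*_i\}_{i=0}^d)$ on some $V$ with parameter array $(\{\th_i\};\{\th^*_i\};\{\zeta_i\})$, and by Lemma \ref{lem:T-module} this makes $V$ an irreducible $T$-module on which $a,a^*,e_i,e^*_i$ act as $A,A^*,E_i,E^*_i$. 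Since $\Phi$ is sharp, $E^*_0V$ is one-dimensional; moreover each $t\in e^*_0Te^*_0$ satisfies $t=e^*_0te^*_0$, so $t$ maps $V$ into $E^*_0V$ and annihilates $(1-E^*_0)V$. Restriction to $E^*_0V$ therefore gives a $\K$-algebra homomorphism $\mathrm{ev}_\zeta:e^*_0Te^*_0\to\mathrm{End}_\K(E^*_0V)=\K$. By Definition \ref{def:split}, the generator $e^*_0\tau_i(a)e^*_0$ acts on $E^*_0V$ as the scalar
\[
 s_i=\frac{\zeta_i}{(\th^*_0-\th^*_1)(\th^*_0-\th^*_2)\cdots(\th^*_0-\th^*_i)}\qquad(1\le i\le d),
\]
so $\mathrm{ev}_\zeta\circ\mu$ is evaluation of a polynomial at the point $(s_1,\ldots,s_d)\in\K^d$.

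I then combine these evaluations with a density argument. If $f\in\ker\mu$, then $\mathrm{ev}_\zeta(\mu(f))=f(s_1,\ldots,s_d)=0$ for every admissible $\{\zeta_i\}$. By condition (i) of Conjecture \ref{conj:main} each factor $\th^*_0-\th^*_j$ ($1\le j\le d$) is nonzero, so $\zeta\mapsto(s_1,\ldots,s_d)$ is an invertible diagonal linear map and hence a homeomorphism of $\K^d$ for the Zariski topology; it therefore sends Zariski-dense sets to Zariski-dense sets. With $\zeta_0=1$ fixed, condition (iii) cuts out a subset of $\K^d$ (coordinates $\zeta_1,\ldots,\zeta_d$) by the two inequalities $\zeta_d\neq0$ and $\sum_{i=0}^d\eta_{d-i}(\th_0)\eta^*_{d-i}(\th^*_0)\zeta_i\neq0$. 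The coefficient of $\zeta_d$ in the latter sum is $\eta_0(\th_0)\eta^*_0(\th^*_0)=1$, so the sum is a nonconstant affine function and each inequality defines a nonempty Zariski-open subset of the irreducible space $\K^d$; their intersection is thus nonempty and Zariski-dense. Hence the points $(s_1,\ldots,s_d)$ are Zariski-dense in $\K^d$, and since $\K$ is algebraically closed a polynomial vanishing on a dense set is zero. Therefore $f=0$, so $\ker\mu=0$ and $\mu$ is an isomorphism.

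The main obstacle is precisely the supply of tridiagonal systems realizing prescribed split sequences: the whole argument rests on Proposition \ref{prop:ITqRacah}, which is available only for $q$-Racah parameters over an algebraically closed field, and this is what forces both the $q$-Racah hypothesis and the opening reduction to $\overline{\K}$. Once that input is granted, the remaining work is the bookkeeping needed to make the density statement precise --- in particular verifying that the admissible split sequences form a Zariski-dense set rather than, say, a single hypersurface --- together with checking the faithful-flatness reduction at the level of the presented algebra $T$.
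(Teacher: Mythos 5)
Your proposal is correct and is essentially the paper's own argument: you first reduce to $\overline{\K}$ (the paper's Theorem \ref{thm:FK}, done there via a generator-to-generator homomorphism $T(p,\K)\to T(p,\overline{\K})$ and a commuting square, which sidesteps your base-change identification $T(p,\overline{\K})\cong T(p,\K)\otimes_{\K}\overline{\K}$ --- though that identification and the faithful-flatness step are standard and valid), and then over $\overline{\K}$ you combine Proposition \ref{prop:ITqRacah} with evaluation of $e^*_0Te^*_0$ on the one-dimensional space $E^*_0V$ of a realized tridiagonal system, exactly as in the paper's Theorem \ref{thm:A}. Your Zariski-density bookkeeping is the same computation the paper carries out by hand: it multiplies $f$ by the two polynomials $g$ and $h$ encoding the inequalities $\zeta_d\neq 0$ and $\sum_{i=0}^d\eta_{d-i}(\th_0)\eta^*_{d-i}(\th^*_0)\zeta_i\neq 0$ of condition (iii), shows the product vanishes at every point of $\K^d$, and concludes via the infinitude of $\K$ and the fact that $\K[x_1,\ldots,x_d]$ is a domain.
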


\medskip

We have shown that the $\mu$-conjecture implies Conjecture \ref{conj:main}
\cite[Theorem 10.1]{NT:muconj}, and that the $\mu$-conjecture holds for
$d\leq 5$ \cite[Theorem 12.1]{NT:muconj}.
In this paper we prove Conjecture \ref{conj:mainp} for the case 
in which $p$ is $q$-Racah.
In our proof we make heavy use of Proposition \ref{prop:ITqRacah}.
Our main result is Theorem \ref{thm:main}.

\section{The main result}

\indent
In this section we obtain our main result, which is Theorem \ref{thm:main}. 
On our way to this result we obtain two other results
Theorem \ref{thm:A} and \ref{thm:FK}, which may be of independent interest.
Throughout this section, let $d$ denote a nonnegative integer and let 
$p=(\{\th_i\}_{i=0}^d; \{\th^*_i\}_{i=0}^d)$ denote a sequence
of scalars taken from $\K$ that satisfies conditions {\rm (i)}, {\rm (ii)}
of Conjecture {\rm \ref{conj:main}}.

\medskip

\begin{theorem}           \label{thm:A}             \samepage
Assume the field $\K$ is infinite and let
$T=T(p,\K)$ denote the $\K$-algebra from Definition {\rm \ref{def:T}}.
Assume that, for every sequence $\{\zeta_i\}_{i=0}^d$ of scalars taken from $\K$
that satisfies condition {\rm (iii)} of Conjecture {\rm \ref{conj:main}},
there exists a sharp tridiagonal system over $\K$ that has
parameter array 
$(\{\th_i\}_{i=0}^d; \{\th^*_i\}_{i=0}^d; \{\zeta_i\}_{i=0}^d)$.
Then the map $\mu : \K[x_1,\ldots,x_d] \to e^*_0Te^*_0$ from 
Lemma {\rm \ref{lem:mu}} is an isomorphism.
\end{theorem}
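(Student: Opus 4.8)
The plan is to show that $\mu:\K[x_1,\ldots,x_d]\to e^*_0Te^*_0$ is injective, since surjectivity is already given by Lemma \ref{lem:mu}. Since $e^*_0Te^*_0$ is commutative (Lemma \ref{lem:commute}), the strategy is to argue that $\mu$ cannot collapse any nonzero polynomial, by producing enough tridiagonal systems to ``separate points'' in the sense of evaluation. The key observation is that each tridiagonal system $\Phi$ with the fixed eigenvalue data $p$ gives, via Lemma \ref{lem:T-module}, a $T$-module $V$, and hence induces a $\K$-algebra homomorphism $e^*_0Te^*_0\to\mathrm{End}_\K(e^*_0V)$. Because $\rho_0=1$ for a sharp system, $e^*_0V$ is one-dimensional, so this homomorphism is really a $\K$-\emph{character} $\chi_\Phi:e^*_0Te^*_0\to\K$. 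Composing with $\mu$, the scalar $\chi_\Phi(\mu(x_i))=\chi_\Phi(e^*_0\tau_i(a)e^*_0)$ is exactly the split-sequence entry $\zeta_i$ of $\Phi$ up to the normalizing factor appearing in Definition \ref{def:split}. Thus the composite $\chi_\Phi\circ\mu$ is the evaluation homomorphism $\K[x_1,\ldots,x_d]\to\K$ at the point determined by the split sequence of $\Phi$.

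With this dictionary in place, the heart of the argument is a density/separation step. Suppose $f\in\ker\mu$, i.e.\ $f(e^*_0\tau_1(a)e^*_0,\ldots,e^*_0\tau_d(a)e^*_0)=0$ in $e^*_0Te^*_0$. Applying each character $\chi_\Phi$ forces $f$ to vanish at every point of the form (split sequence of some sharp system with data $p$), suitably rescaled. By hypothesis, \emph{every} sequence $\{\zeta_i\}_{i=0}^d$ satisfying condition (iii) of Conjecture \ref{conj:main} is realized as the split sequence of some sharp tridiagonal system over $\K$. Condition (iii) merely requires $\zeta_0=1$, $\zeta_d\neq0$, and the single nonvanishing condition \eqref{eq:ineq}; these constraints cut out a Zariski-dense subset of the affine space $\{\zeta_1,\ldots,\zeta_d\}\cong\K^{d-1}\times\K$ (note $\zeta_0=1$ is fixed). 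Since $\K$ is infinite, a polynomial in $d-1$ variables vanishing on the complement of a proper subvariety must be identically zero. Hence $f=0$, proving injectivity.

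I would carry this out in the following order. First I would set up, for an arbitrary sharp tridiagonal system $\Phi$ with eigenvalue data $p$, the character $\chi_\Phi$ on $e^*_0Te^*_0$ coming from the one-dimensional space $e^*_0V$, and compute $\chi_\Phi(\mu(x_i))$ explicitly in terms of $\{\zeta_i\}$ using Definition \ref{def:split}; this makes $\chi_\Phi\circ\mu$ an honest point-evaluation. Second, I would invoke the realizability hypothesis to see which evaluation points actually arise: all rescaled split sequences allowed by condition (iii). Third, I would verify that these points form a Zariski-dense subset of affine $(d-1)$-space, using that condition (iii) is the non-vanishing of a single nonzero polynomial (coming from \eqref{eq:ineq}) together with $\zeta_d\neq0$. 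Finally, combining density with $\K$ infinite, I would conclude that any $f\in\ker\mu$ vanishes identically.

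The main obstacle will be the density step, specifically confirming that the realizable evaluation points are Zariski-dense and that the rescaling from Definition \ref{def:split} does not shrink the image. One must check that the map sending $\{\zeta_i\}$ to the actual values $\chi_\Phi(\mu(x_i))$ is a polynomial (indeed affine-linear, since $\tau_i(a)$ expands linearly in powers of $a$ and $\zeta_i$ enters linearly) coordinate change with invertible Jacobian, so that density of the $\zeta$-region transfers to density of the evaluation points. A secondary subtlety is ensuring the complement of the variety defined by condition (iii) is genuinely nonempty and open-dense over an arbitrary infinite $\K$, which follows because the defining expression in \eqref{eq:ineq} is a nonzero polynomial in the $\zeta_i$ (its $\zeta_d$ coefficient is $\eta_0(\th_0)\eta^*_0(\th^*_0)=1\neq0$) and hence does not vanish identically.
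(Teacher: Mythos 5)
Your proposal is correct and is essentially the paper's own argument: the paper likewise realizes every admissible split sequence via the hypothesis, lets $\mu(f)$ act on $e^*_0V$ (nonzero, one-dimensional by sharpness) to force $f$ to vanish at the corresponding rescaled points $(\xi_1,\ldots,\xi_d)$, and disposes of the inadmissible points exactly by your density observation — concretely, the paper multiplies $f$ by the two nonzero polynomials $g,h$ encoding condition (iii), shows $\psi=fgh$ vanishes on all of $\K^d$ (hence $\psi=0$ since $\K$ is infinite), and concludes $f=0$ because $\K[x_1,\ldots,x_d]$ is a domain, which is precisely the proof of the Zariski-density lemma you invoke. The only differences are cosmetic (your character $\chi_\Phi$ versus the paper's direct scalar-action computation, and your ``polynomial in $d-1$ variables'' should read ``$d$ variables'', a harmless slip).
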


\begin{proof}
We assume $d \geq 1$; otherwise the result is obvious.
The map $\mu$ is surjective by Lemma \ref{lem:mu}, so it suffices
to show that $\mu$ is injective.
We pick any $f \in \K[x_1,\ldots,x_d]$ such that $\mu(f)=0$, and show $f=0$.
Instead of working directly with $f$, it will be convenient
to work with the product $\psi = fgh$, where
$g = \eta^*_d(\th^*_0)x_d$
and  $h$ is $\eta^*_d(\th^*_0)$ times
\[
  \eta_d(\th_0)+\sum_{i=1}^d \eta_{d-i}(\th_0)x_i.
\]
By construction $\eta^*_d(\th^*_0)\not=0$ so each of
$g,h$ is nonzero. 
To show that $f=0$, we show $\psi=0$ and invoke the fact that 
$\K[x_1,\ldots,x_d]$ is a domain \cite[page 129]{Rot}.
We now show that $\psi =0$.
Since the field $\K$ is infinite it suffices to
show that $\psi(\xi_1,\ldots,\xi_d)=0$ for all
$d$-tuples $(\xi_1,\ldots,\xi_d)$ of scalars taken from $\K$
\cite[Proposition 6.89]{Rot}.
Let $(\xi_1,\ldots,\xi_d)$ be given.
Define
\begin{equation}    \label{eq:defzetai}
 \zeta_i = \xi_i (\th^*_0-\th^*_1)(\th^*_0-\th^*_2)\cdots(\th^*_0-\th^*_i)
   \qquad\qquad (1 \leq i \leq d)
\end{equation}
and $\zeta_0=1$. Observe
\begin{align*}
  g(\xi_1,\ldots,\xi_d) &= \zeta_d,  \\
  h(\xi_1,\ldots,\xi_d) 
    &= \sum_{i=0}^d \eta_{d-i}(\th_0)\eta^*_{d-i}(\th^*_0)\zeta_i.
\end{align*}
First assume $\{\zeta_i\}_{i=0}^d$ does not satisfy condition (iii)
of Conjecture \ref{conj:main}. 
Then either $\zeta_d=0$, in which case
$g(\xi_1,\ldots,\xi_d)=0$, or 
$0 = \sum_{i=0}^d \eta_{d-i}(\th_0)\eta^*_{d-i}(\th^*_0)\zeta_i$, 
in which case $h(\xi_1,\ldots,\xi_d)=0$.
Either way $\psi(\xi_1,\ldots,\xi_d)=0$. 
Next assume $\{\zeta_i\}_{i=0}^d$ does satisfy condition (iii) of 
Conjecture \ref{conj:main}.
By the assumption of the present theorem,
there exists a sharp tridiagonal system
$\Phi=(A;\{E_i\}_{i=0}^d;A^*;\{E^*_i\}_{i=0}^d)$ over $\K$ 
that has parameter array 
$(\{\th_i\}_{i=0}^d; \{\th^*_i\}_{i=0}^d; \{\zeta_i\}_{i=0}^d)$.
Let $V$ denote the vector space underlying $\Phi$.
By Definition \ref{def:split} the following holds on $V$:
\begin{equation}     \label{eq:split2}
 E^*_0\tau_i(A)E^*_0 =
  \frac{\zeta_i E^*_0}
       {(\th^*_0-\th^*_1)(\th^*_0-\th^*_2)\cdots(\th^*_0-\th^*_i)}
  \qquad\qquad (1 \leq i \leq d).
\end{equation}
Consider the $T$-module structure on $V$ from Lemma \ref{lem:T-module}. 
Using \eqref{eq:defzetai}, \eqref{eq:split2} we find that
the following holds on $V$:
\begin{equation}    \label{eq:aux5}
  e^*_0 \tau_i(a) e^*_0 = \xi_i e^*_0  \qquad\qquad   (1 \leq i \leq d).
\end{equation}
Pick an integer $i$ $(1 \leq i \leq d)$.
By \eqref{eq:aux5} the element $e^*_0\tau_i(a)e^*_0$ acts on $e^*_0V$
as $\xi_i$ times the identity map.
Recall that $\mu$ sends $x_i$ to $e^*_0\tau_i(a)e^*_0$,
so $\mu(x_i)$ acts on $e^*_0V$ as $\xi_i$ times the identity map.
By these comments $\mu(f)$ acts on $e^*_0V$ as
$f(\xi_1,\ldots,\xi_d)$ times the identity map.
But $\mu(f)=0$ and $e^*_0V \neq 0$ so
$f(\xi_1,\ldots,\xi_d)=0$,
and therefore $\psi(\xi_1,\ldots,\xi_d)=0$.
We have shown $\psi(\xi_1,\ldots,\xi_d)=0$ for all
$d$-tuples $(\xi_1,\ldots,\xi_d)$ of scalars taken from $\K$,
and therefore $\psi=0$. The result follows.
\end{proof}

\medskip

\begin{theorem}       \label{thm:FK}        \samepage
Let $\F$ denote a field extension of $\K$.
Let the algebras $T_{\K}=T(p,\K)$ and $T_{\F}=T(p,\F)$ be as in Definition 
{\rm \ref{def:T}}.
Let
\begin{align*}
 \mu_{\K} &: \K[x_1,\ldots,x_d] \to e^*_0 T_{\K}e^*_0, \\
 \mu_{\F} &: \F[x_1,\ldots,x_d] \to e^*_0 T_{\F}e^*_0
\end{align*}
denote the maps from Lemma {\rm \ref{lem:mu}} and assume that $\mu_{\F}$ is an isomorphism.
Then $\mu_{\K}$ is an isomorphism.
\end{theorem}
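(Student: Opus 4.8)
The plan is to exploit the fact that $\mu_{\K}$ is nothing more than the restriction of $\mu_{\F}$ to polynomials with coefficients in $\K$, once we identify the relevant algebras correctly. First I would establish the basic functorial relationship between $T_{\K}$ and $T_{\F}$. The defining generators and relations of $T$ in Definition \ref{def:T} involve only the scalars $\th_i,\th^*_i$, which lie in $\K \subseteq \F$; the relations \eqref{eq:eiej}--\eqref{eq:eiaskej} are identical whether we work over $\K$ or $\F$. Consequently there is a natural $\K$-algebra homomorphism $T_{\K} \to T_{\F}$ sending each generator to its namesake, and in fact one expects $T_{\F} \cong \F \ot_{\K} T_{\K}$ as $\F$-algebras, since the presentation over $\F$ is obtained from the presentation over $\K$ by extension of scalars. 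This is the key structural observation, and it should follow routinely from the universal property of algebras defined by generators and relations.

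The main work is to transfer this base-change statement from $T$ to the corner algebra $e^*_0 T e^*_0$. Since $e^*_0$ is one of the generators and lies in the image of $T_{\K} \to T_{\F}$, the isomorphism $T_{\F} \cong \F \ot_{\K} T_{\K}$ should restrict to an isomorphism $e^*_0 T_{\F} e^*_0 \cong \F \ot_{\K} (e^*_0 T_{\K} e^*_0)$. Here I would use that tensoring with the flat (indeed free) $\K$-module $\F$ commutes with the corner construction: multiplication by the idempotent $1 \ot e^*_0$ on both sides is compatible with the isomorphism. Granting this, the square relating $\mu_{\K}$ and $\mu_{\F}$ commutes after extending scalars, because both maps send $x_i \mapsto e^*_0 \tau_i(a) e^*_0$ and the $\tau_i$ have coefficients determined by the $\th_i \in \K$. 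Concretely, $\mu_{\F}$ is identified with $\mathrm{id}_{\F} \ot \mu_{\K}$ under the identifications $\F[x_1,\ldots,x_d] \cong \F \ot_{\K} \K[x_1,\ldots,x_d]$ and $e^*_0 T_{\F} e^*_0 \cong \F \ot_{\K} e^*_0 T_{\K} e^*_0$.

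With the commuting diagram in hand, the conclusion is a standard descent argument. We already know $\mu_{\K}$ is surjective by Lemma \ref{lem:mu}, so only injectivity remains. Suppose $f \in \K[x_1,\ldots,x_d]$ lies in $\ker \mu_{\K}$. Then under the identifications above, $1 \ot f \in \F \ot_{\K} \K[x_1,\ldots,x_d]$ satisfies $(\mathrm{id}_{\F} \ot \mu_{\K})(1 \ot f) = \mu_{\F}(1 \ot f) = 0$; since $\mu_{\F}$ is assumed injective, $1 \ot f = 0$. Because $\K \to \F$ is faithfully flat (any nonzero field extension is free, hence faithfully flat), the map $f \mapsto 1 \ot f$ from $\K[x_1,\ldots,x_d]$ into $\F \ot_{\K} \K[x_1,\ldots,x_d]$ is injective, forcing $f = 0$. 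Hence $\mu_{\K}$ is injective and therefore an isomorphism.

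The step I expect to be the genuine obstacle is verifying cleanly that $e^*_0 T_{\F} e^*_0 \cong \F \ot_{\K} e^*_0 T_{\K} e^*_0$, i.e.\ that forming the $e^*_0$-corner commutes with base change. The potential subtlety is that $T$ is infinite-dimensional and presented by generators and relations, so one must be careful that the base-change map $\F \ot_{\K} T_{\K} \to T_{\F}$ really is an isomorphism and not merely surjective; this is where the universal property of the presentation, together with flatness of $\F$ over $\K$, must be invoked with care. Once that isomorphism is pinned down, restricting to the corner is purely formal, and the rest of the argument is bookkeeping. I would therefore devote the bulk of the proof to the base-change isomorphism for $T$ itself and treat everything downstream as routine.
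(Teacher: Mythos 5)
Your proposal is correct, but it is considerably heavier than the paper's argument, and the extra weight sits exactly on the step you flagged as the genuine obstacle. The paper uses the same commuting square, but it only needs the \emph{existence} of the $\K$-algebra homomorphism $N\colon T_{\K}\to T_{\F}$ (immediate from the universal property of the presentation, since in $T_{\F}$ the generators satisfy the defining relations of $T_{\K}$) and of its restriction $\nu\colon e^*_0T_{\K}e^*_0\to e^*_0T_{\F}e^*_0$. No injectivity of $N$ or $\nu$, and no base-change isomorphism, is ever invoked: from $\mu_{\F}\circ\iota=\nu\circ\mu_{\K}$ and the injectivity of the left-hand side ($\iota$ is injective, $\mu_{\F}$ is injective by hypothesis) one gets that $\nu\circ\mu_{\K}$ is injective, and this alone forces $\mu_{\K}$ to be injective, whatever $\nu$ may do. Your route instead proves $T_{\F}\cong\F\ot_{\K}T_{\K}$, shows that forming the $e^*_0$-corner commutes with this base change (which does work: $e^*_0T_{\K}e^*_0$ is a $\K$-module direct summand of $T_{\K}$ by the Peirce decomposition, and the two maps between $T_{\F}$ and $\F\ot_{\K}T_{\K}$ supplied by the universal properties are mutually inverse on generators), identifies $\mu_{\F}$ with $\mathrm{id}_{\F}\ot\mu_{\K}$, and finishes by faithfully flat descent. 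All of this is correct, so your proof goes through, and it delivers strictly more information---the base-change behaviour of $T$ and of its corner algebra---which might be of independent interest. But for the theorem at hand that information is dispensable: by using the commuting square only one-sidedly, the paper makes your ``genuine obstacle'' disappear, and the whole proof reduces to a few lines of diagram chasing.
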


\begin{proof}
The map $\mu_{\K}$ is surjective by Lemma \ref{lem:mu}, 
so it suffices to show that $\mu_{\K}$ is injective.
Since $\K$ is a subfield of $\F$ we may view any $\F$-algebra as a 
$\K$-algebra.
The inclusion map $\K \to \F$ induces an injective $\K$-algebra homomorphism 
$\iota: \K[x_1,\ldots,x_d] \to \F[x_1,\ldots,x_d]$.
In the $\K$-algebra $T_{\F}$ the defining generators 
$a$, $\{e_i\}_{i=0}^d$, $a^*$, $\{e^*_i\}_{i=0}^d$ satisfy the defining
relations for $T_{\K}$. Therefore there exists
a $\K$-algebra homomorphism $N: T_{\K} \to T_{\F}$
that sends each of the $T_{\K}$ generators
$a$, $\{e_i\}_{i=0}^d$, $a^*$, $\{e^*_i\}_{i=0}^d$
to the corresponding generator in $T_{\F}$.
The restriction of $N$ to $e^*_0T_{\K}e^*_0$
is a $\K$-algebra homomorphism $e^*_0T_{\K}e^*_0 \to e^*_0T_{\F}e^*_0$;
we denote this homomorphism by $\nu$. 
By construction the following diagram commutes:
\[
 \begin{CD}
   \K[x_1,\ldots,x_d]  @>{\displaystyle \iota}>> \F[x_1,\ldots,x_d]  \\
   @V{\displaystyle \mu_{\K}}VV  @VV{\displaystyle \mu_{\F}}V   \\
   e^*_0T_{\K}e^*_0  @>{\displaystyle \nu}>> e^*_0T_{\F} e^*_0
 \end{CD}
\]
The maps $\iota$ and $\mu_{\F}$ are injective so their composition
$\mu_{\F} \circ \iota$ is injective. 
But $\mu_{\F} \circ \iota = \nu \circ \mu_{\K}$ so
$\nu \circ \mu_{\K}$ is injective. Therefore $\mu_{\K}$ is injective
and hence an isomorphism.
\end{proof}

\medskip

The following is our main result.

\medskip

\begin{theorem}  \label{thm:main}   \samepage
Let $\K$ denote a field and let $d$ denote a nonnegative integer.
Let $p=(\{\th_i\}_{i=0}^d;$ $\{\th^*_i\}_{i=0}^d)$ 
denote a sequence of scalars taken from $\K$ that is $q$-Racah 
in the sense of Definition {\rm \ref{def:qRacah}}.
Let the $\K$-algebra $T=T(p,\K)$ be as in Definition {\rm \ref{def:T}}.
Then the corresponding map 
$\mu : \K[x_1,\ldots,x_d] \to e^*_0Te^*_0$ 
from Lemma {\rm \ref{lem:mu}} is an isomorphism.
\end{theorem}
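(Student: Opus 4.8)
The plan is to reduce the general case to the algebraically closed case by chaining the two auxiliary results Theorem~\ref{thm:A} and Theorem~\ref{thm:FK}, with the Ito--Terwilliger existence statement Proposition~\ref{prop:ITqRacah} supplying the input. First I would dispose of the trivial case $d=0$, where the claim is immediate. For $d \geq 1$, I would set $\F = \overline{\K}$, the algebraic closure of $\K$, and regard $\F$ as a field extension of $\K$.

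The one point requiring genuine attention is that the sequence $p$, originally $q$-Racah over $\K$, remains $q$-Racah when regarded as a sequence over $\F$. Condition (i) of Definition~\ref{def:qRacah} (distinctness of the $\th_i$ and of the $\th^*_i$) is insensitive to the ambient field and carries over unchanged. For condition (ii), the scalars $q,a,b,c,a^*,b^*,c^*$ witnessing the $q$-Racah property over $\K$ lie in $\overline{\K}$; since $\F = \overline{\K}$ is already algebraically closed, $\overline{\F} = \overline{\K}$, so the very same scalars witness the $q$-Racah property over $\F$. Hence $p$ is $q$-Racah over $\F$, and in particular it satisfies conditions (i), (ii) of Conjecture~\ref{conj:main} over $\F$ (the $q$-Racah parametrization makes the expressions in \eqref{eq:indep} equal with common value $q^2+q^{-2}+1$), so the algebra $T(p,\F)$ and the associated map $\mu_{\F}$ are defined.

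Now I would run the chain. Since $\F$ is algebraically closed and $p$ is $q$-Racah over $\F$, Proposition~\ref{prop:ITqRacah} provides, for every sequence $\{\zeta_i\}_{i=0}^d$ taken from $\F$ that satisfies condition (iii) of Conjecture~\ref{conj:main}, a sharp tridiagonal system over $\F$ with parameter array $(\{\th_i\}_{i=0}^d; \{\th^*_i\}_{i=0}^d; \{\zeta_i\}_{i=0}^d)$. Because $\F$ is algebraically closed it is infinite, so the hypotheses of Theorem~\ref{thm:A}, applied over $\F$, are exactly met; I conclude that $\mu_{\F}$ is an isomorphism. Finally, applying Theorem~\ref{thm:FK} to the extension $\K \subseteq \F$, the fact that $\mu_{\F}$ is an isomorphism forces $\mu_{\K} = \mu$ to be an isomorphism, which is the assertion of the theorem.

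As for the main obstacle: there is essentially no computational difficulty here, since all the substantive work is packaged inside the three cited results. The only step demanding care is the stability of the $q$-Racah property under passage to the algebraic closure, which rests on the harmless identity $\overline{\overline{\K}} = \overline{\K}$. I would also double-check that the $\zeta$-sequences quantified over in Theorem~\ref{thm:A} are taken from $\F$ rather than merely from $\K$, so that they match precisely the family of parameter arrays realized by Proposition~\ref{prop:ITqRacah}.
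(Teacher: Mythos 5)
Your proposal is correct and follows essentially the same route as the paper's own proof: set $\F=\overline{\K}$, combine Proposition \ref{prop:ITqRacah} with Theorem \ref{thm:A} (noting $\F$ is infinite) to conclude that $\mu_{\F}$ is an isomorphism, then descend to $\K$ via Theorem \ref{thm:FK}. Your explicit verification that $p$ remains $q$-Racah over $\F$ --- resting on $\overline{\overline{\K}}=\overline{\K}$, since the witnesses $q,a,b,c,a^*,b^*,c^*$ already lie in $\overline{\K}$ --- is a detail the paper passes over silently, and it is correct.
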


\begin{proof}
Abbreviate $\F=\overline{\K}$ for the algebraic closure of $\K$,
and note that $\F$ is infinite. 
Let $T_{\F}=T(p,\F)$ denote the $\F$-algebra from Definition \ref{def:T}
and let $\mu_{\F} : \F[x_1,\ldots,x_d] \to e^*_0 T_{\F}e^*_0$ be the
corresponding map from Lemma \ref{lem:mu}.
By Proposition \ref{prop:ITqRacah}, for every sequence $\{\zeta_i\}_{i=0}^d$
of scalars taken from $\F$ that satisfies condition (iii) of
Conjecture \ref{conj:main}, there exists a sharp tridiagonal system over $\F$
that has parameter array
$(\{\th_i\}_{i=0}^d; \{\th^*_i\}_{i=0}^d; \{\zeta_i\}_{i=0}^d)$.
By this and Theorem \ref{thm:A} the map $\mu_{\F}$ is an isomorphism.
Now $\mu$ is an isomorphism by Theorem \ref{thm:FK}.
\end{proof}

\medskip

We finish with a comment.

\medskip

\begin{lemma}
Proposition {\rm \ref{prop:ITqRacah}} remains true if we drop the assumption 
that $\K$ is algebraically closed.
\end{lemma}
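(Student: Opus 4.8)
The plan is to deduce this comment directly from Theorem \ref{thm:main} together with the already-established implication that the $\mu$-conjecture implies the classification conjecture \cite[Theorem 10.1]{NT:muconj}. The assertion in question is precisely the ``if'' direction of Conjecture \ref{conj:main}, specialized to a $q$-Racah parameter sequence but now over an arbitrary field $\K$; Proposition \ref{prop:ITqRacah} is exactly this statement under the extra hypothesis that $\K$ is algebraically closed. Since Theorem \ref{thm:main} removes precisely that hypothesis at the level of the map $\mu$, the comment should follow formally.

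First I would check that a $q$-Racah sequence $(\{\th_i\}_{i=0}^d; \{\th^*_i\}_{i=0}^d)$ satisfies conditions (i), (ii) of Conjecture \ref{conj:main}, so that the algebra $T=T(p,\K)$ and the map $\mu$ are defined in the first place. Condition (i) is built into Definition \ref{def:qRacah}(i). For condition (ii), one substitutes the explicit forms \eqref{eq:1}, \eqref{eq:2} into the two expressions of \eqref{eq:indep}; each reduces to the common value $q^2+q^{-2}+1$, which is independent of $i$ for $2 \leq i \leq d-1$. Hence $p$ satisfies the standing hypotheses (i), (ii), and Theorem \ref{thm:main} applies to give that $\mu$ is an isomorphism, that is, the $\mu$-conjecture holds for this $p$ over $\K$.

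Next I would invoke \cite[Theorem 10.1]{NT:muconj}, which asserts that the $\mu$-conjecture implies Conjecture \ref{conj:main}. Applying this to our $p$, for which the $\mu$-conjecture now holds by the previous step, yields the ``if'' direction of Conjecture \ref{conj:main} for $p$: for every sequence $\{\zeta_i\}_{i=0}^d$ of scalars from $\K$ satisfying condition (iii), there exists a sharp tridiagonal system over $\K$ with parameter array $(\{\th_i\}_{i=0}^d; \{\th^*_i\}_{i=0}^d; \{\zeta_i\}_{i=0}^d)$. This is exactly the conclusion of Proposition \ref{prop:ITqRacah}, now with no restriction on $\K$.

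The argument is essentially a bookkeeping corollary, so I expect no serious obstacle; the genuine content lives in Theorem \ref{thm:main}. The one point requiring care is the precise formulation of \cite[Theorem 10.1]{NT:muconj}: I must confirm that it delivers the ``if'' direction for a single fixed parameter sequence $p$ under the assumption that the $\mu$-conjecture holds for that same $p$, rather than only as a global implication quantified over all $p$. Since that implication is proved pointwise in $p$, invoking it for our specific $q$-Racah sequence is legitimate, and the comment follows.
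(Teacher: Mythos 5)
Your proposal is correct and matches the paper's own proof, which simply cites Theorem \ref{thm:main} together with \cite[Theorem 10.1]{NT:muconj}; your additional checks (that a $q$-Racah sequence satisfies conditions (i), (ii) of Conjecture \ref{conj:main}, and that the implication in \cite[Theorem 10.1]{NT:muconj} applies pointwise in $p$) are exactly the bookkeeping the paper leaves implicit.
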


\begin{proof}
Immediate from Theorem \ref{thm:main} and \cite[Theorem 10.1]{NT:muconj}.
\end{proof}

{
\small

}

\bigskip

\noindent Kazumasa Nomura \hfil\break
\noindent College of Liberal Arts and Sciences \hfil\break
\noindent Tokyo Medical and Dental University \hfil\break
\noindent Kohnodai, Ichikawa, 272-0827 Japan \hfil\break
\noindent email: {\tt knomura@pop11.odn.ne.jp} \hfil\break

%\noindent Tatsuro Ito \hfil\break
%\noindent Department of Computational Science \hfil\break
%\noindent Faculty of Science \hfil\break
%\noindent Kanazawa University \hfil\break
%\noindent Kakuma-machi \hfil\break
%\noindent Kanazawa 920-1192, Japan \hfil\break
%\noindent email:  {\tt ito@kappa.s.kanazawa-u.ac.jp}

\bigskip

\noindent Paul Terwilliger \hfil\break
\noindent Department of Mathematics \hfil\break
\noindent University of Wisconsin \hfil\break
\noindent 480 Lincoln Drive \hfil\break
\noindent Madison, WI 53706-1388 USA \hfil\break
\noindent email: {\tt terwilli@math.wisc.edu }\hfil\break

\end{document}